\newtheorem{theorem}{Theorem}[section]
\newtheorem{lemma}[theorem]{Lemma}
\newtheorem{definition}[theorem]{Definition}
\newtheorem{example}[theorem]{Example}
\newtheorem{conjecture}[theorem]{Conjecture}
\newcommand\Z{\mathbb{Z}}
\newcommand{\PP}{{\mathbb{P}}}
\newcommand{\NN}{{\mathbb{N}}}
\newcommand{\ZZ}{{\mathbb{Z}}}
\newcommand{\EE}{{\mathbb{E}}}
\newcommand{\SSS}{{\mathbb{S}}}
\begin{document}

\title{The Cauchy-Davenport Theorem for Finite Groups}
\author{Jeffrey Paul Wheeler\thanks{Department of Mathematics,
the University of Pittsburgh, Pittsburgh PA 15260, USA.
E-mail: \texttt{jwheeler@pitt.edu}.}}
\date{February 2006}
\maketitle

\begin{abstract}
The Cauchy-Davenport theorem states that for any two nonempty
subsets $A$ and $B$ of $\Z/p\Z$ we have $|A+B|\ge \min\{p,|A|+|B|-1\}$,
where $A+B:=\{a+b\bmod p\mid a\in A,\, b\in B\}$.
We generalize this result from $\Z/p\Z$ to arbitrary finite
(including non-abelian) groups. This result from early in $2006$ is
independent of Gyula K\'{a}rolyi's\footnote{Gyula was a visitor at the University of Memphis early in my time as a graduate student there.  I wish to thank
him for introducing me to this problem and encouraging me to work on it (in addition to teaching a great class on the subject).  Regrettably I did not inform Gyula that I was working on the problem and that progress was being made, hence the independent results. I discovered Gyula had a result the day before presenting the work to the Combinatorics seminar at the University of Memphis (which was incredibly disappointing to a graduate student with his first result).}
$2005$ result in \cite{Gyula3} and uses different methods.
\end{abstract}

\section{Background and Motivation}

The problem we will be considering lies in the area of Additive
Number Theory. This relatively young area of Mathematics is part of
Combinatorial Number Theory and can best be described as the study
of sums of sets of integers. As such, we begin by stating the
following definition:
\begin{definition}\label{definition:A+B}
 For subsets $A$ and $B$ of a group $G$, define
 \[
  A+B:=\{a+b\mid a\in A,\, b\in B\}
 \]
 where $+$ is the group operation. We write
 \[
  A\cdot B:=\{ab\mid a\in A,\,b\in B\}
 \]
 in the case when the group $G$ is written multiplicatively.
\end{definition}

A simple example of a problem in Additive Number Theory is given two
subsets $A$ and $B$ of a set of integers, what facts can we
determine about the sumset $A+B:=\{a+b \mid a \in A, b \in B \}$?  The topic of this paper is one such problem.  Note that a very familiar problem in Number Theory, namely
Lagrange's theorem\index{Four Square Theorem} that every nonnegative
integer can be written as the sum of four squares, can be expressed
in terms of sumsets. In particular,

\begin{theorem}\index{Lagrange's Four Square
Theorem}[Lagrange's Four Square Theorem]\ \\
Let $\NN_0 = \{x \in \ZZ \mid x \geq 0 \}$ and let $\SSS = \{ x^2
\mid x \in \ZZ \}$.  Then
$$\NN_0 = \SSS + \SSS + \SSS + \SSS.$$
\end{theorem}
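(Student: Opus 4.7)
The plan is to reduce the statement to the prime case and then prove each prime is a sum of four squares by a descent argument. The first reduction rests on Euler's four-square identity,
\[
(a_1^2+a_2^2+a_3^2+a_4^2)(b_1^2+b_2^2+b_3^2+b_4^2) = c_1^2+c_2^2+c_3^2+c_4^2,
\]
where the $c_i$ are explicit bilinear forms in the $a_j,b_k$. Since $0=0^2+0^2+0^2+0^2$ and $1=1^2+0^2+0^2+0^2$, this identity shows that the set $\SSS+\SSS+\SSS+\SSS$ is closed under multiplication in $\NN_0$. Combined with the fundamental theorem of arithmetic, it suffices to prove $p\in\SSS+\SSS+\SSS+\SSS$ for every prime $p$; the case $p=2=1^2+1^2+0^2+0^2$ is immediate, so I focus on odd $p$.

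For odd $p$, I would first produce some multiple of $p$ that is a sum of four squares. Consider the two subsets of $\Z/p\Z$
\[
X=\{x^2\bmod p:0\le x\le (p-1)/2\},\qquad Y=\{-1-y^2\bmod p:0\le y\le (p-1)/2\}.
\]
Each has cardinality $(p+1)/2$ because $x\mapsto x^2$ is injective on $\{0,1,\dots,(p-1)/2\}$. By pigeonhole these sets intersect in $\Z/p\Z$, so there exist $x,y$ with $x^2+y^2+1\equiv 0\pmod p$. Writing $x^2+y^2+1^2+0^2 = mp$ with $1\le m<p$ (the upper bound coming from $x,y\le (p-1)/2$), I obtain an initial multiple $mp\in\SSS+\SSS+\SSS+\SSS$.

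The main step, and the hardest one, is the infinite descent: given the smallest positive $m$ with $mp=a_1^2+a_2^2+a_3^2+a_4^2$, show $m=1$. I would handle $m$ even by a parity argument, pairing the $a_i$ so that each $(a_i\pm a_j)/2$ is an integer and producing an expression for $(m/2)p$ as a sum of four squares, contradicting minimality. For $m$ odd with $m>1$, I would choose representatives $b_i\equiv a_i\pmod m$ with $|b_i|<m/2$; then $b_1^2+b_2^2+b_3^2+b_4^2=mn$ with $0\le n<m$, and $n>0$ because $n=0$ would force $m\mid a_i$ for each $i$, hence $m^2\mid mp$, contradicting $m<p$. Applying Euler's identity to $mp$ and $mn$ produces $m^2np$ as a sum of four squares $c_1^2+\cdots+c_4^2$, and direct computation shows each $c_i$ is divisible by $m$, so dividing through yields $np$ as a sum of four squares with $n<m$, again contradicting minimality.

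The principal obstacle is the bookkeeping in this descent, specifically verifying that all four $c_i$ produced by Euler's identity are divisible by $m$ when the $b_i$ are the chosen least absolute residues; this is a short but slightly delicate congruence check that is the technical heart of the argument. Once minimality forces $m=1$, every odd prime lies in $\SSS+\SSS+\SSS+\SSS$, and combined with the reduction above we conclude $\NN_0=\SSS+\SSS+\SSS+\SSS$.
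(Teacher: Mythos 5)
The paper states Lagrange's Four Square Theorem without proof---it appears only as motivational background, recast in sumset notation---so there is no proof of record to compare yours against. Judged on its own terms, your outline is the standard and correct Euler--Lagrange argument: Euler's four-square identity makes $\SSS+\SSS+\SSS+\SSS$ multiplicatively closed, reducing to primes; the pigeonhole argument on $\{x^2\}$ and $\{-1-y^2\}$ (each of size $(p+1)/2$) correctly produces $mp=x^2+y^2+1^2+0^2$ with $1\le m<p$; and the descent is set up properly in both the even case (pairing the $a_i$ by parity) and the odd case (least absolute residues $b_i$, with the right justification that $n>0$ since $n=0$ would force $m^2\mid mp$). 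You correctly flag the one step you do not carry out, namely checking that each $c_i$ in Euler's identity is divisible by $m$; this is indeed routine (the first $c_i$ is congruent to $\sum a_i^2\equiv 0 \pmod m$ and the others telescope to $0$ modulo $m$ because $b_i\equiv a_i$), so the gap is one of execution rather than of ideas. The only cosmetic caveat is that the descent needs $n<m$, which follows from $|b_i|<m/2$ giving $\sum b_i^2<m^2$; you state the bound but it is worth making the inference explicit. As an outline the proof is sound and complete in its essential structure.
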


As well the the binary version of Goldbach's Conjecture can be
restated in terms of sumsets.
\begin{conjecture}\index{Goldbach's Conjecture}[Goldbach's Conjecture]\
\\
Let $\EE = \{ 2x \mid x \in \ZZ, x \geq 2 \}$ and let $\PP = \{ p
\in \ZZ \mid p$ is prime $\}$.  Then
\begin{align}
\EE &\subseteq \PP + \PP.\label{goldbach}
\end{align}
\end{conjecture}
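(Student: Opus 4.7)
The plan I would outline is essentially the one analytic number theorists have pursued for nearly a century, with the candid caveat that this is Goldbach's Conjecture and no unconditional proof is known. First I would set up the representation function $r(n) := \#\{(p,q) \in \PP \times \PP \mid p+q = n\}$ for even $n \in \EE$ and attempt to prove $r(n) > 0$ for every such $n$. The Hardy-Littlewood heuristic, based on the prime number theorem and a local-to-global singular series analysis, predicts
\[
 r(n) \sim 2 C_2 \prod_{\substack{p \mid n \\ p > 2}} \frac{p-1}{p-2} \cdot \frac{n}{(\log n)^2},
\]
where $C_2$ is the twin-prime constant, so one expects abundantly many representations; the entire difficulty lies in converting this heuristic into an unconditional lower bound.

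The two classical analytic attacks are the circle method and sieve methods, and I would try both in parallel. For the circle method, write $r(n) = \int_0^1 S(\alpha)^2 e^{-2\pi i n \alpha}\,d\alpha$ with $S(\alpha) = \sum_{p \le n} e^{2\pi i p \alpha}$, partition $[0,1]$ into major and minor arcs, and hope that the major arcs reproduce the predicted singular series while the minor arcs contribute a negligible error. This strategy succeeds for \emph{three} primes, giving Vinogradov's theorem for large odd $n$ and, unconditionally for all odd $n \ge 7$, Helfgott's proof of the ternary Goldbach conjecture. Complementarily, I would attempt to sieve $\{n - p : p \le n\}$ for primes, which following Chen's method yields that every sufficiently large $n \in \EE$ is of the form $p + q$ with $q$ having at most two prime factors.

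The hard part, and the reason this plan cannot actually be carried through, is a pair of well-known structural obstructions. On the circle-method side, Parseval gives $\int_0^1 |S(\alpha)|^2\,d\alpha \asymp n/\log n$, which is only a single factor of $\log n$ larger than the conjectured main term for $r(n)$; there is simply no slack for a pointwise minor-arc bound on $S(\alpha)$ of the strength required, and all currently known minor-arc estimates fall short by exactly this margin. On the sieve-theoretic side, one runs into Selberg's parity obstruction, which provably prevents a pure sieve from distinguishing integers with an odd number of prime factors from those with an even number, and hence from replacing Chen's almost-prime $q$ by a genuine prime. I have no new idea for circumventing either barrier; the statement appears in the excerpt only to motivate, via its reformulation as the sumset inclusion $\EE \subseteq \PP + \PP$, the Cauchy-Davenport type questions that form the actual subject of the paper.
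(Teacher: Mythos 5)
You have correctly identified that this statement is Goldbach's Conjecture, which the paper states only as a \emph{conjecture} --- it offers no proof, and indeed none exists; it appears solely to motivate the sumset formulation $\EE \subseteq \PP + \PP$ before the paper turns to its actual subject, the Cauchy--Davenport theorem. Your proposal is therefore not a proof and does not claim to be one; as a survey of the known partial results (Vinogradov/Helfgott for three primes, Chen's theorem) and of the two standard obstructions (the lack of slack in the minor-arc $L^2$ bound, and the parity barrier for sieves) it is accurate and appropriately candid. Nothing further is required here, and no comparison with a proof in the paper is possible, since the paper contains none.
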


In other words, every even integer is greater than $2$ is the sum of
two primes.  Notice that we do not have set equality in equation
$(\ref{goldbach})$ because $2 \in \PP$.

The theorem we wish to extend was first proved by Augustin Cauchy in
$1813$\footnote{Cauchy used this theorem to prove that $Ax^2 + By^2
+ C \equiv 0 (\bmod p)$ has solutions provided that $ABC \not \equiv 0$.
  This is interesting in that Lagrange used
this result to establish his four squares theorem.}
\cite{Cauchy} and later independently reproved by Harold Davenport
in $1935$ \cite{Davenport} (Davenport discovered in $1947$
\cite{DavenportHist} that Cauchy had previously proved the theorem.)
In particular,
\begin{theorem}[Cauchy-Davenport]\label{theorem:cauchy-davenport}
 If\/ $A$ and\/ $B$ are nonempty subsets of $\Z/p\Z$, $p$ prime,
 then $|A+B|\ge\min\{p,|A|+|B|-1\}$.
\end{theorem}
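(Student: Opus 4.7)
The plan is to proceed by induction on $|B|$, using the classical Davenport transform. The base case $|B|=1$ is immediate: $A+B$ is the translate $A+b$, so $|A+B|=|A|=|A|+|B|-1$. For the inductive step with $|B|\ge 2$, the case $|A+B|\ge p$ is also easy: since $A+B\subseteq \Z/p\Z$ this forces $A+B=\Z/p\Z$, and the bound holds. So I may assume $|A+B|<p$, which in particular gives $|A|<p$.

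The key reduction is to translate $A$ (which preserves $|A+B|$) so that both $A\cap B\neq \emptyset$ and $B\not\subseteq A$. This is the unique place where primality of $p$ is used. Suppose no such translate exists; then for every $t$ with $(A+t)\cap B\neq \emptyset$ one also has $B\subseteq A+t$. Equivalently, the sets $b-A$ for $b\in B$ all coincide. Then every difference $b_1-b_2$ stabilizes $A$, so the stabilizer of $A$ is a nontrivial subgroup of $\Z/p\Z$; by primality this subgroup must be the whole group, forcing $A=\Z/p\Z$ and contradicting $|A|<p$. Hence a suitable translate exists, and we replace $A$ by it.

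Now set $A':=A\cup B$ and $B':=A\cap B$. Inclusion-exclusion gives $|A'|+|B'|=|A|+|B|$; the condition $A\cap B\neq\emptyset$ makes $B'$ nonempty, and $B\not\subseteq A$ gives $|B'|<|B|$, enabling the induction. For $x\in A'$ and $y\in B'$: if $x\in A$ then $x+y\in A+B$ since $y\in B$; if $x\in B$ then $x+y\in A+B$ since $y\in A$ and addition in $\Z/p\Z$ is commutative. Thus $A'+B'\subseteq A+B$, and applying the inductive hypothesis to $(A',B')$ yields
\[
|A+B|\ge |A'+B'|\ge \min\{p,|A'|+|B'|-1\}=\min\{p,|A|+|B|-1\}.
\]
I expect the translation lemma above to be the only non-routine step and the main obstacle: it is exactly the step that would break down over $\Z/n\Z$ for composite $n$, and any generalization (as the paper promises for non-abelian groups) will presumably need a genuinely different substitute for it.
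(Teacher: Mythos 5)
Your proof is correct, but there is no proof in the paper to compare it against: the paper states the Cauchy--Davenport theorem as a known classical result, citing Cauchy (1813) and Davenport (1935) and mentioning the Alon--Nathanson--Ruzsa polynomial-method proof, and never proves it itself. What you have written is a sound, self-contained rendition of the classical transform argument (the Davenport/Dyson $e$-transform). The one delicate step is handled properly: if no translate of $A$ meets $B$ without containing it, then all the sets $b-A$ for $b\in B$ coincide, so the nonzero differences of $B$ (which exist since $|B|\ge 2$) lie in the stabilizer of $A$, which by primality is all of $\Z/p\Z$, contradicting $|A|<p$. The replacement $(A,B)\mapsto(A\cup B,\,A\cap B)$ then preserves $|A|+|B|$, strictly shrinks $|B|$, and keeps the new sumset inside $A+B$ (this last point does use commutativity, as you note), so the induction on $|B|$ closes. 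Your closing remark is also accurate about how the paper proceeds: its generalization does not adapt the transform at all. Instead it inducts on $|G|$, decomposing a solvable group via a normal subgroup $K$ with $G/K$ abelian, bounding the fibre products $|A_i\cdot B_j|$ by induction inside $K$ and the projected sumset $|A^2\cdot B^2|$ by K\'arolyi's abelian version of the theorem, with Feit--Thompson disposing of the non-solvable case. That chain ultimately rests on the $\Z/p\Z$ case, so your argument serves as a foundation for the paper's results rather than as an alternative to any proof the paper supplies.
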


We note that in $1935$ Inder Chowla \cite{Chowla} extended the
result to composite moduli $m$ when $0 \in B$ and the other members
of $B$ are relatively prime to~$m$.  As well it is worth noting that
in $1996$ Alon, Nathanson, and Ruzsa provided a simple proof of this
theorem using the Polynomial Method~\cite{Alon}.

Of interest to this work is Gyula K\'{a}rolyi's extension of the
theorem to abelian groups \cite{Gyula1,Gyula2}. Before we
state the theorem, though, a useful definition:
\begin{definition}[Minimal Torsion Element]\label{definition:minimal torsion element}
 Let $G$ be a group.  We
 define $p(G)$ to be the smallest positive integer $p$ for which
 there exists a nonzero element $g$ of $G$ with $pg=0$ (or, if
 multiplicative notation is used, $g^{p}=1$, $g\ne1$). If no such $p$ exists,
 we write $p(G)=\infty$.
\end{definition}

\begin{lemma}\label{lemma:p(G)}
 For any finite group $G\ne\{1\}$, $p(G)$ is the smallest prime factor of\/~$|G|$.
\end{lemma}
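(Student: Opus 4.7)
The plan is to show two inequalities: let $q$ denote the smallest prime factor of $|G|$, and I will prove $p(G) \le q$ and $p(G) \ge q$.

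First I would observe that, directly from the definition, $p(G)$ coincides with $\min\{\operatorname{ord}(g) : g \in G,\, g \ne 1\}$, because $g^n = 1$ if and only if $\operatorname{ord}(g) \mid n$, so the smallest $n$ witnessing a given nonidentity $g$ is precisely $\operatorname{ord}(g)$. This reformulation is convenient and turns the problem into a statement about element orders.

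Next I would show that $p(G)$ is itself a prime. Suppose $g \ne 1$ achieves the minimum, with $\operatorname{ord}(g) = p(G) = d$. If $d$ were composite, write $d = rs$ with $r$ a prime divisor and $s > 1$. Then $h := g^{s}$ satisfies $h^{r} = g^{rs} = 1$, and $h \ne 1$ since $s < d = \operatorname{ord}(g)$. This exhibits a nonidentity element killed by $r < d$, contradicting the minimality of $d$. Hence $p(G)$ is prime. Moreover, by Lagrange's theorem applied to the cyclic subgroup generated by $g$, $p(G) = d \mid |G|$, so $p(G)$ is a prime divisor of $|G|$, giving $p(G) \ge q$.

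For the reverse inequality $p(G) \le q$, I would invoke Cauchy's theorem for finite groups: if a prime $r$ divides $|G|$, then $G$ contains an element of order $r$. Applying this to $r = q$, we obtain some $g \ne 1$ with $g^{q} = 1$, whence $p(G) \le q$ by the definition of $p(G)$. Combining the two inequalities yields $p(G) = q$, as desired.

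The only nontrivial ingredient here is Cauchy's theorem itself, which holds for arbitrary (including non-abelian) finite groups; everything else is elementary manipulation of orders and Lagrange's theorem, so I expect no real obstacle beyond citing Cauchy's result correctly.
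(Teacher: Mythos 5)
Your proof is correct and follows essentially the same route as the paper: Cauchy's theorem supplies an element of order equal to the smallest prime factor $q$ (giving $p(G)\le q$), and Lagrange's theorem rules out any smaller witness (giving $p(G)\ge q$). The only difference is that you explicitly verify that $p(G)$ is prime before applying Lagrange, a small step the paper leaves implicit by only considering a hypothetical smaller \emph{prime} witness; your version is marginally more complete but not a different argument.
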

\begin{proof}
Let $p$ be the smallest prime dividing $|G|$.
Then by Cauchy's Theorem, there is an
element $g \in G$ or order~$p$, i.e., $g^p=1$ but $g\ne1$.
Suppose there were a smaller prime $q$ with $h^q=1$, $h\ne1$.
Then $|\langle h\rangle|=q$ and by Lagrange's
Theorem $q\mid|G|$. This contradicts the choice of~$p$.
\end{proof}

Now we state the generalization of Theorem~\ref{theorem:cauchy-davenport}
to abelian groups.
\begin{theorem}[K\'arolyi \cite{Gyula1,Gyula2}]\label{theorem:abelianCD}
 If\/ $A$ and\/ $B$ are nonempty subsets of an abelian group $G$, then
 $|A+B| \ge \min\{p(G),|A|+|B|-1\}$.
\end{theorem}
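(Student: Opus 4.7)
The plan is to derive the theorem as a direct consequence of Kneser's addition theorem for abelian groups, which states that for nonempty finite subsets $A, B$ of an abelian group $G$,
\[
|A+B| \ge |A+H| + |B+H| - |H|,
\]
where $H := \{g \in G : g + (A+B) = A+B\}$ is the stabilizer of the sumset $A+B$. From this inequality the conclusion falls out by a case split on whether $H$ is trivial.

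First, if $H = \{0\}$, then $A+H = A$ and $B+H = B$, so Kneser's bound becomes $|A+B| \ge |A|+|B|-1$, which is already at least $\min\{p(G),|A|+|B|-1\}$, and we are done.

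Otherwise $|H| \ge 2$. Since $H$ is a subgroup of $G$, its order divides $|G|$, so every prime dividing $|H|$ also divides $|G|$; by Lemma~\ref{lemma:p(G)} this forces $|H| \ge p(G)$. Because $A+B$ is invariant under translation by $H$, it is a nonempty union of cosets of $H$, so $|H|$ divides $|A+B|$, and in particular
\[
|A+B| \ge |H| \ge p(G) \ge \min\{p(G),|A|+|B|-1\}.
\]

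The real work is hidden in Kneser's theorem, and that is where I expect the main difficulty to lie. A self-contained proof of Kneser proceeds by induction on $|B|$ using the Dyson $e$-transform $(A,B)\mapsto(A\cup(B+e),\,B\cap(A-e))$, which preserves $|A|+|B|$ and can only shrink the sumset. The configurations on which the transform fails to reduce to a strictly smaller case are precisely those that force a nontrivial stabilizer, and managing that bookkeeping is what delivers the $|H|$ correction term in the inequality.
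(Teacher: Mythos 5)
Your proposal is correct, but note that the paper does not actually prove this statement: Theorem~\ref{theorem:abelianCD} is imported as a black box from K\'arolyi's papers \cite{Gyula1,Gyula2} and then used as the base case (applied to the abelian quotient $G/K$) in the induction of Theorem~\ref{theorem:CDsolvable}. So there is no in-paper argument to compare against; what you have written is a proof of the cited result itself. Your derivation from Kneser's theorem is the standard one and is sound: in the case $H=\{0\}$ Kneser gives $|A+B|\ge|A|+|B|-1$ directly, and in the case $|H|\ge 2$ the sumset is a nonempty union of $H$-cosets, so $|A+B|\ge|H|\ge p(G)$ since $|H|$ divides $|G|$ and hence, by Lemma~\ref{lemma:p(G)}, has a prime factor at least $p(G)$. (The finiteness of $G$, or at least of $A$ and $B$, is being used tacitly here; in the torsion-free case $p(G)=\infty$, but then a finite sumset cannot have a nontrivial stabilizer, so only the first case occurs.) The only caveat is that the entire content is delegated to Kneser's theorem, which is a genuinely harder result than the statement being proved; your sketch of the Dyson $e$-transform induction is the right outline, but a referee would want either a full proof or an explicit citation for Kneser, exactly as the paper itself chooses to cite K\'arolyi rather than reprove the abelian case.
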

Before we continue we state a famous and very useful result.
\begin{theorem}[Feit-Thompson \cite{FT}]\label{theorem:feit-thompson}
Every group of odd order is solvable.
\end{theorem}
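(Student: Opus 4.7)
This theorem is cited from Feit and Thompson's landmark $1963$ paper, not proved in the present work; a genuine proof occupies some $255$ journal pages. Nevertheless, let me sketch the strategy of that original argument, which is the only approach known.

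The natural plan is proof by minimal counterexample. Suppose the theorem fails, and let $G$ be a non-solvable group of odd order of minimum possible size. Using the fact that an extension of a solvable group by a solvable group is solvable, together with the minimality of $|G|$, one concludes that $G$ is simple: any proper nontrivial normal subgroup $N$ would itself have odd order $<|G|$ and hence be solvable, and likewise $G/N$ would be solvable, forcing $G$ to be solvable. So it suffices to prove that no non-abelian simple group of odd order exists.

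The attack on such a hypothetical simple $G$ proceeds in three main stages. First, local analysis: one studies centralizers and normalizers of elementary abelian $p$-subgroups and, using transfer and the odd-order hypothesis, severely restricts the structure of the maximal subgroups — they turn out to be essentially Frobenius-like, with tightly controlled Hall decompositions. Second, character theory: leveraging the structure of these maximal subgroups, one uses Brauer's coherence method to lift characters of Hall subgroups to exceptional characters of $G$ and shows that they can fit into the character table of $G$ in only one combinatorial pattern. Third, generators and relations: the character-theoretic conclusions force the existence of two particular elements $\sigma,\tau\in G$ satisfying explicit relations, from which one derives that a certain product has even order, contradicting the hypothesis.

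The main obstacle — and the reason this argument takes hundreds of pages — is the intricate interleaving of the local analysis and the character theory: neither half alone is strong enough, and one must bootstrap back and forth to whittle $G$ down to the single configuration that can finally be contradicted in the third stage. Mercifully, the present paper only needs Theorem~\ref{theorem:feit-thompson} as a black box, so none of this machinery has to be reproduced here.
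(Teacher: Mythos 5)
The paper offers no proof of this theorem either---it is cited as a black box from Feit and Thompson's 1963 paper, exactly as you treat it---so your handling matches the paper's, and your sketch of the original argument's architecture (minimal counterexample reducing to a simple group, then local analysis, coherence of exceptional characters, and the final generators-and-relations contradiction) is an accurate description of the genuine proof.
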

Since any group $G$ of even order has $p(G)=2$, we will mainly be
considering groups of odd order. Hence by
Theorem~\ref{theorem:feit-thompson}, we will mainly be considering
only solvable groups.

\section{A Basic Structure of Finite Solvable Groups}

Throughout this section $G$ will be a finite solvable group, i.e.,
there exists a chain of subgroups
\[
 \{1\}=G_0 \unlhd G_1 \unlhd G_2\unlhd \cdots \unlhd G_n = G
\]
such that $G_{i-1}$ is a normal subgroup of $G_i$
and the quotient group $G_i/G_{i-1}$ is abelian for $i=1,2,\dots,n$.

Hence by definition, either $G=\{1\}$, or there is some proper
normal subgroup $K\lhd G$ such that $K$ is also solvable
and the quotient group $G/K$ is abelian. (In fact for finite
groups one can also insist that $G/K$ is cyclic of prime order,
i.e., isomorphic to some $\Z/p\Z$, $p$ prime. However we shall
not require this.)

Fix for each coset $h\in G/K$ a representative $\tilde{h}\in G$,
so that $\tilde{h}\in h$ and $h=K\tilde{h}\in G/K$. Each $g \in G$
lies in a unique coset $h\in G/K$, and then $g\tilde{h}^{-1}$
lies in~$K$. Thus there is a $k\in K$ and an $h\in G/K$
such that $g=k\tilde{h}$. The pair $(k,h)$ is unique since if
$g=k\tilde{h}=k'\tilde{h'}$ then $Kg=h=h'$ and then
$k=g\tilde{h}^{-1}=k'$.
Define
\begin{equation}\label{psi}
 \psi\colon G \rightarrow K \times G/K
 \quad\text{by } \psi(g)= (k, h),\quad\text{where }g=k\tilde{h}.
\end{equation}
Then $\psi$ is a bijection between the sets $G$ and $K\times G/K$.
Define an operation $\star$ on $K \times G/K$ by
\[
(k_1,h_1) \star (k_2,h_2):=(k_1\phi_{h_1}(k_2)\eta_{h_1,h_2},h_1 h_2).
\]
where $\phi_h\in\operatorname{Aut}(K)$
is defined by $\phi_h(k)=\tilde{h}k\tilde{h}^{-1}\in K$
(recall $K\unlhd G$), and
\begin{equation}\label{eta}
 \eta_{h_1,h_2}=\tilde{h}_1\cdot\tilde{h}_2\cdot(\widetilde{h_1h_2})^{-1}\in K.
\end{equation}
Note that both $\phi_h$ and $\eta_{h_1,h_2}$ depend on the choice of coset
representatives of $G/K$.
As will be seen in the examples, $\eta$ plays a role analogous to
``carrying the one'' in simple arithmetic.

\begin{lemma}[Basic Structure of Solvable Groups]\label{lemma:basic structure}
 Let\/ $G$ be a solvable group with\/ $K \unlhd G$. Upon fixing the
 set\/ $R=\{\tilde{h}\mid h\in G/K\}$ of coset representatives of\/~$G/K$,
 $\psi$ in \eqref{psi} is an isomorphism from\/ $G$ to the group\/ $(K\times G/K,\star)$.
\end{lemma}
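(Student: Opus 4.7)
The plan is to verify directly that $\psi$ is a bijective group homomorphism from $G$ to $(K \times G/K, \star)$; this simultaneously shows that $\star$ defines a group structure on $K\times G/K$ and that $\psi$ is an isomorphism. Bijectivity was already established in the paragraph preceding the lemma: each $g\in G$ determines a unique coset $h=Kg\in G/K$ and hence a unique factorization $g=k\tilde{h}$ with $k=g\tilde{h}^{-1}\in K$. Thus the content of the lemma reduces to checking the homomorphism identity $\psi(g_1 g_2)=\psi(g_1)\star\psi(g_2)$; associativity and the existence of inverses in $(K\times G/K,\star)$ then follow for free by transport of structure through $\psi$.

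Before the main calculation I would verify two auxiliary points. First, that $\phi_h(k)=\tilde{h}k\tilde{h}^{-1}$ really defines an element of $\operatorname{Aut}(K)$: the image lies in $K$ because $K\unlhd G$, it is a homomorphism since conjugation always is, and its inverse is conjugation by $\tilde{h}^{-1}$. Second, that $\eta_{h_1,h_2}\in K$: under the natural surjection $G\to G/K$, both $\tilde{h}_1\tilde{h}_2$ and $\widetilde{h_1 h_2}$ map to the coset $h_1 h_2$, so their quotient $\tilde{h}_1\tilde{h}_2(\widetilde{h_1 h_2})^{-1}$ lies in the kernel~$K$. These two observations make the binary operation $\star$ well defined on $K\times G/K$.

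The homomorphism identity is then a direct manipulation. With $g_i=k_i\tilde{h}_i$,
\[
 g_1 g_2 = k_1\tilde{h}_1 k_2\tilde{h}_2 = k_1\bigl(\tilde{h}_1 k_2\tilde{h}_1^{-1}\bigr)\tilde{h}_1\tilde{h}_2 = k_1\phi_{h_1}(k_2)\,\tilde{h}_1\tilde{h}_2,
\]
and rearranging \eqref{eta} to $\tilde{h}_1\tilde{h}_2=\eta_{h_1,h_2}\widetilde{h_1 h_2}$ gives
\[
 g_1 g_2 = \bigl(k_1\,\phi_{h_1}(k_2)\,\eta_{h_1,h_2}\bigr)\,\widetilde{h_1 h_2}.
\]
The parenthesized factor lies in $K$ and the second factor is the chosen representative of $h_1 h_2$, so by uniqueness of the factorization one reads off $\psi(g_1 g_2)=(k_1\phi_{h_1}(k_2)\eta_{h_1,h_2},\,h_1 h_2)=\psi(g_1)\star\psi(g_2)$.

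I do not foresee any serious obstacle: the definition of $\star$ has been engineered precisely so that $\psi$ transports the product of $G$, and once $\phi_h$ and $\eta$ are seen to take values in $K$ the identification is mechanical. The only mild subtlety is bookkeeping, namely keeping track of which factor is being conjugated by $\tilde{h}_1$ and being careful not to conflate the chosen representative $\widetilde{h_1 h_2}$ with the product $\tilde{h}_1\tilde{h}_2$; the discrepancy between these is exactly what the ``carry'' $\eta_{h_1,h_2}$ is designed to record.
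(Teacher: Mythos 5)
Your proposal is correct and follows essentially the same route as the paper: bijectivity is taken from the preceding discussion, and the homomorphism identity is verified by the same insertion of $\tilde{h}_1^{-1}\tilde{h}_1$ and the rearrangement $\tilde{h}_1\tilde{h}_2=\eta_{h_1,h_2}\widetilde{h_1h_2}$, merely run from $\psi(g_1g_2)$ toward $\psi(g_1)\star\psi(g_2)$ rather than the reverse. Your additional checks that $\phi_h\in\operatorname{Aut}(K)$ and $\eta_{h_1,h_2}\in K$ are sound and make explicit what the paper leaves as remarks.
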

\begin{proof}
As noted above, $\psi$ is a bijection, so it is enough to show
it is a homomorphism. Suppose that $g_1=k_1\tilde{h}_1$ and $g_2=k_2\tilde{h}_2$.
Then
\begin{align*}
 \psi(g_1) \star \psi(g_2) &= (k_1,h_1) \star (k_2,h_2)\\
 &=(k_1 \phi_{h_1}(k_2) \eta_{h_1, h_2}, h_1 h_2)\\
 &=(k_1 \tilde{h}_1 k_2 \tilde{h}_1^{-1} \tilde{h}_1 \tilde{h}_2
 (\widetilde{h_1 h_2})^{-1}, h_1 h_2)\\
 &=\psi(k_1\tilde{h}_1 k_2 \tilde{h}_2
 (\widetilde{h_1 h_2})^{-1} \widetilde{h_1 h_2})\\
 &= \psi(k_1\tilde{h}_1 k_2\tilde{h}_2)\\
 &= \psi(g_1 g_2)
\end{align*}
\end{proof}

%In summary, for $A \subseteq G$, we can represent $A$ as a subset
%$\{(k_1,h_1),\dots,(k_t,h_t)\}$ of $K\times G/K$
%where $k_1,k_2,\dots,k_t\in K$ and $h_1,h_2,\dots h_t \in G/K$.
%We note that it is certainly not the case that the $k_i$'s nor
%the $h_j$'s are distinct.

It is worth noting that the construction of $\star$ on $K \times G/K$
is more general than the semi-direct product of two groups. Indeed, $G$ may
not be a semi-direct product of $K$ and $G/K$. If however it is, then one
can choose the representatives $\tilde{h}$ so that $\eta_{h_1,h_2}=1$
for all $h_1,h_2\in G/K$.

Before we continue, consider two illustrative examples.

\begin{example} Let $p$ be a prime, $G=\Z/p^2\Z$ and $K=p\Z/p^2\Z$.
Let the representatives of $G/K$ be $\{0,1,\dots,p-1\}$ ($\bmod p^2$).
Then we can represent
$G$ as a set of pairs $(ap,b+K)$, $a,b\in\{0,1,\dots,p-1\}$
(or more simply as just $(a,b)$, see Table~\ref{zp2}).
The automorphism $\phi_{b+K}$ is the identity as
$G$ is abelian. However, $\eta(b+K,d+K)=(b\bmod p)+(d\bmod p)-(a+b\bmod p)$
which is $0\in K$ when $b+d<p$ and $p=1p\in K$ when $b+c\ge p$.
Hence addition in $G$ is given by $(a,b)+(c,d)=(a+b+\eta\bmod p,b+d\bmod p)$
where $\eta=0$ if $b+d<p$ and $1$ if $b+d\ge p$.
However, this is effectively just ``addition with carry'' of two digit base~$p$
numbers.

\begin{table}[!h] %the [h] tells LaTeX to put the table "right here" with emphasis
 \caption{Elements $g\in\Z/p^2\Z$ represented as pairs $\phi(g)=(a,b)$, $a,b\in\{0,\dots,p-1\}$.}
 \label{zp2}
 \[\begin{array}{|c|cccccccc|}\hline
  g&0&1&\cdots\!&p-1&p&p+1&\cdots\!&p^2-1\\\hline
  \psi(g)&(0,0)&(0,1)&\cdots\!&(0,p\!-\!1)&(1,0)&(1,1)&\cdots\!&(p\!-\!1,p\!-\!1)\\\hline
 \end{array}\]
\end{table}
\end{example}

\begin{example} Let $Q$ be the quaternion group, namely
$Q=\{ \pm 1, \pm i, \pm j, \pm k \}$ where
$ij=k$, $jk=i$, $ki=j$, $ji=-k$, $kj=-i$, $ik=-j$ and $i^2=j^2=k^2=-1$.
Put $K=\{\pm 1,\pm k\}$, so that $Q/K=\{K,Kj\}$ and we choose $1$ as our
coset representative of $K$ and $j$ as the coset representative of $Kj$

The order of $Kj$ in $Q/K$ is $2$ however the order of $j$ in
$Q$ is $4$. Indeed
\[
 \eta_{Kj,Kj}=j\cdot j\cdot 1^{-1}=j^2=-1
\]
since the coset representative of $(Kj)^2=K$ is~$1$.

Thus, since $i=-k \cdot j$,
\begin{align*}
 \psi(i\cdot i)
 &=(-k,Kj)(-k,Kj)\\
 &=(-k (j(-k)j^{-1})\eta_{j,j} \},(Kj)^2)\\
 &=(-k(k)(-1),K)\\
 &=(-1,K)=\psi(-1)
\end{align*}
Which is what we hoped for since $i \cdot i = -1$.
\end{example}

\begin{table}[!h] %the [h] tells LaTeX to put the table "right here" with emphasis
 \caption{Elements of $Q$ written as pairs with coset representatives 1 and $j$.}\label{table:q=(k,h)}
 \[\begin{array}{|c|c@{\ }c@{\ }c@{\ }c@{\ }c@{\ }c@{\ }c@{\ }c|}\hline
  g\in Q & 1 & -1 & i & -i & j & -j & k & -k \\\hline
 \psi(g)&(1,\!K)&(\!-1,\!K)&(\!-k,\!Kj)&(k,\!Kj)&
 (1,\!Kj)&(\!-1,\!Kj)&(k,\!K)&(\!-k,\!K)\\\hline
 \end{array}\]
\end{table}

This basic structure is sufficient for tackling the job of establishing
the Cauchy-Davenport Theorem for finite groups.  It is worth mentioning
that a more sophisticated structure for solvable groups is required for
proving the related problem of Erd\H{o}s and Heilbronn
(see~\cite{Balister/Wheeler}).
As well, we note that neither $\Z/p^2\Z$ nor the quaternion group is a
semidirect product of its respective $K$ and $G/K$.

Before proceeding, developing some notation will be helpful.

\begin{definition}\label{definition:S}
Let $G$ be a finite solvable group and $K\lhd G$.
For $S \subseteq G$, represent $S$ as a subset $\psi(S)=\{\psi(g)\mid g\in S\}$
of $K\times G/K$ as above and write
\begin{align*}
 S^1 &:= \{k\in K\mid \exists h\in G/K\colon (k,h)\in \psi(S)\},\\
 S^2 &:= \{h\in G/K\mid \exists k\in K\colon (k,h)\in \psi(S)\}.
\end{align*}
In other words, $S^1$ is the collection of first coordinates
and $S^2$ is the collection of second coordinates of the elements
of $\psi(S)$.
\end{definition}

\section{The Cauchy-Davenport Theorem for Finite Solvable Groups}

\begin{theorem}\label{theorem:CDsolvable}
 Suppose\/ $G$ is a finite solvable group and\/ $A$, $B$ are non-empty
 subsets of\/~$G$. If\/ $|A|+|B|-1\le p(G)$, then\/ $|A\cdot B|\ge|A|+|B|-1$.
\end{theorem}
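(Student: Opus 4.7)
The plan is to induct on $|G|$, using Lemma \ref{lemma:basic structure} to peel off an abelian quotient and combining K\'arolyi's abelian theorem on $G/K$ with the inductive hypothesis applied to the smaller subgroup $K$.

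If $G$ is abelian, Theorem \ref{theorem:abelianCD} closes the case. Otherwise, since $G$ is non-abelian and solvable, I pick a proper nontrivial normal subgroup $K\lhd G$ with $G/K$ abelian---for example the commutator subgroup $[G,G]$, which is proper by solvability of $G$ and nontrivial by non-abelianness. Since $|K|,|G/K|$ both divide $|G|$, Lemma \ref{lemma:p(G)} gives $p(K),p(G/K)\ge p(G)$; so the hypothesis $|A|+|B|-1\le p(G)$ is inherited by the smaller problems on $K$ and on $G/K$.

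Using Lemma \ref{lemma:basic structure}, decompose $A=\bigsqcup_{h\in A^2}\hat A_h\tilde h$ and $B=\bigsqcup_{h'\in B^2}\hat B_{h'}\widetilde{h'}$ with $\hat A_h,\hat B_{h'}\subseteq K$, and set $\alpha=|A^2|,\beta=|B^2|$. K\'arolyi's theorem applied in $G/K$ gives $|A^2\cdot B^2|\ge\alpha+\beta-1$. For each decomposition $c=hh'$ with $h\in A^2,\,h'\in B^2$, the product $\hat A_h\tilde h\cdot\hat B_{h'}\widetilde{h'}=\hat A_h\,\phi_h(\hat B_{h'})\,\eta_{h,h'}\,\widetilde{hh'}$ lies in the fiber of $A\cdot B$ over $c$ and, by the inductive hypothesis applied in $K$ (using $\phi_h\in\operatorname{Aut}(K)$), has cardinality at least $|\hat A_h|+|\hat B_{h'}|-1$. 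Choosing one distinguished decomposition $(h_1(c),h_2(c))$ for each $c\in A^2B^2$ yields
\[
|A\cdot B|\;\ge\;\sum_{c\in A^2B^2}\bigl(|\hat A_{h_1(c)}|+|\hat B_{h_2(c)}|-1\bigr),
\]
and a short count shows that if the decompositions can be chosen so that every $h\in A^2$ appears as some $h_1(c)$ and every $h'\in B^2$ as some $h_2(c)$, the right-hand side is at least $|A|+|B|+|A^2B^2|-\alpha-\beta\ge|A|+|B|-1$.

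The main obstacle is producing this ``covering'' assignment of decompositions. The cases $\alpha=1$ or $\beta=1$ are immediate: when $A$ lies in a single coset of $K$, the fibers of $A\cdot B$ are in bijection with those of $B$, each bounded by induction on $K$, and direct summation gives $|A\cdot B|\ge\beta|A|+|B|-\beta\ge|A|+|B|-1$. For the principal case $\alpha,\beta\ge 2$ I plan a Hall-type (``rainbow edge-cover'') argument on the bipartite graph $A^2\times B^2$ whose edges carry labels from $C^2=A^2\cdot B^2$, with $|C^2|\ge\alpha+\beta-1$ providing exactly the threshold needed to cover both sides. The most delicate point is ruling out a Hall obstruction; here I expect to use either structural information from Kneser/Vosper on the equality case of abelian Cauchy--Davenport, or the observation that a nontrivial stabilizer of $A^2 B^2$ in $G/K$ would already force $|A\cdot B|\ge|A^2 B^2|\ge p(G)\ge|A|+|B|-1$ directly via Lemma \ref{lemma:p(G)}. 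Carrying out this dichotomy rigorously is where the bulk of the technical care will go.
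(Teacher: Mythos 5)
Your setup---induction on $|G|$, the fiber decomposition over $G/K$ via Lemma~\ref{lemma:basic structure}, K\'arolyi's theorem applied to $A^2\cdot B^2$ in $G/K$, and the inductive hypothesis in $K$ giving $|\hat A_h\,\phi_h(\hat B_{h'})\,\eta_{h,h'}|\ge|\hat A_h|+|\hat B_{h'}|-1$ for each fiber product---matches the paper exactly, and your arithmetic showing that a ``covering'' assignment would yield $|A|+|B|+|A^2B^2|-\alpha-\beta\ge|A|+|B|-1$ is correct. The gap is that the covering assignment is never produced, and it is not a routine Hall argument: if you model the problem as a bipartite graph with left vertex set $A^2\sqcup B^2$ and right vertex set $A^2\cdot B^2$, then Hall's condition applied to the entire left side demands $|A^2\cdot B^2|\ge\alpha+\beta$, whereas K\'arolyi only guarantees $\alpha+\beta-1$; in the extremal case no saturating matching exists, and you are forced to let a single product $c=hh'$ cover both its $h$ and its $h'$ simultaneously, which changes the combinatorial object you need and is precisely the ``delicate point'' you defer. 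Your stabilizer fallback handles one branch, but the complementary branch (critical pairs) is exactly where the covering system still has to be built by hand, and Vosper's theorem is a statement about $\ZZ/p\ZZ$, not about a general abelian quotient $G/K$. As written, the principal case $\alpha,\beta\ge 2$ is not proved.

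None of that machinery is needed: the paper sidesteps the covering problem entirely. Order the fibers so that $a_1=|A_1|$ is the largest fiber of $A$ and assume without loss of generality $\alpha\le\beta$. The $\beta$ products $A_1\cdot B_j$ lie over the $\beta$ distinct cosets $h_1h'_j$, hence are pairwise disjoint, and each has size at least $a_1+b_j-1$ by induction in $K$. Since $|A^2\cdot B^2|\ge\alpha+\beta-1$, at least $\alpha-1$ further cosets of $K$ meet $A\cdot B$, contributing at least $\alpha-1$ additional elements. Summing gives $|A\cdot B|\ge\sum_{j=1}^{\beta}(a_1+b_j-1)+\alpha-1=\beta a_1+|B|-\beta+\alpha-1\ge|A|+|B|-1$, where the last step uses $\beta(a_1-1)+\alpha\ge\alpha(a_1-1)+\alpha=\alpha a_1\ge|A|$. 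Replacing your Hall/Vosper plan with this one-largest-fiber count closes the argument; everything else in your outline survives.
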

\begin{proof}
We will proceed by induction on $|G|$. The result is trivial for
$|G|=1$ (note $p(\{1\})=\infty$), so assume $|G|>1$.
As previously stated, there exists a
$K\lhd G$ so that $G/K$ is abelian, $K\ne G$, and $K$ is solvable.

Let $A$ and $B$ be non-empty subsets of $G$. Write $A$ and
$B$ as subsets of $(K,G/K)$ as above.
Let $A^2=\{h_1,\dots,h_\alpha\}$ and $B^2=\{h'_1,\dots,h'_\beta\}$.
For $i=1,\dots,\alpha$, write
\[
 A_i=\{(k,h)\in \psi(A)\mid h=h_i\},\qquad a_i=|A_i|
\]
and similarly for $B_j$, $b_j=|B_j|$, $j=1,\dots,\beta$.
Assume the $h_i$ and $h'_j$ are ordered so that
\[
 a_1\ge a_2\ge\dots\ge a_\alpha,\qquad b_1\ge b_2\ge\dots\ge b_\beta.
\]
We shall also assume without loss of generality that $\alpha\le\beta$.
Note that $\psi(A)=\bigcup_i A_i$, $\psi(B)=\bigcup_j B_j$, and
\[
 |A|=a_1+a_2+\dots+a_\alpha,\qquad |B|=b_1+b_2+\dots+b_\beta.
\]

If $(k,h_i)\in A_i$ and $(k',h'_j)\in B_j$, then
\[
 (k,h_i)\star(k',h'_j)=(k\phi_{h_i}(k')\eta_{h_i,h'_j},h_ih'_j)
\]
Thus, as $h_i$ and $h'_j$ are constant for all elements in $A_i$
and $B_j$ respectively,
\[
 |A_i\cdot B_j|=|(A_i\cdot B_j)^1|=|A_i^1\cdot B'_j|
\]
where $B'_j=\{\phi_{h_i}(k')\eta_{h_i,h'_j}\mid (k',h'_j)\in B_j\}\subseteq K$.
Note that $|B'_j|=|B_j|$ as $\phi_{h_i}$ is an automorphism of $K$ and
multiplication by the constant $\eta_{h_i,h'_j}$ does not change the size
of the set. But $K$ is solvable and $|G|=|K||G/K|$ so
$a_i+b_j-1\le |A|+|B|-1\le p(G)\le p(K)$. Hence by induction,
\[
 |A_i\cdot B_j|=|A_i^1\cdot B'_j|\ge a_i+b_j-1.
\]

Now $A^2,B^2\subseteq G/K$ and $|G|=|K||G/K|$, so
$\alpha+\beta-1\le |A|+|B|-1\le p(G)\le p(G/K)$.
Hence by Theorem~\ref{theorem:abelianCD} (or by Theorem~\ref{theorem:cauchy-davenport}
if we insist that $G/K$ is cyclic of prime order),
\[
 |A^2\cdot B^2|=|\{h_ih'_j\mid 1\le i\le\alpha,\,1\le j\le\beta\}|\ge\alpha+\beta-1.
\]
Now $\alpha+\beta-1 = (\beta) + (\alpha - 1)$, so there are at
least $\alpha-1$ elements $h_ih'_j$ that are not one of the $\beta$
distinct elements $h_1h'_1, h_1h'_2,\dots,h_1h'_\beta\in G/K$.
In particular, $|A\cdot B|$ contains at least $\alpha-1$ elements
that are not in any $A_1\cdot B_j$. Since the second coordinate of every
element of $A_1\cdot B_j$ is $h_1h'_j$, the sets $A_1\cdot B_j$ are disjoint.
Thus
\begin{align*}
 |A\cdot B| &\ge|A_1\cdot B_1|+|A_1\cdot B_2|+\dots+|A_1\cdot B_\beta|+\alpha-1\\
 &\ge \sum_{j=1}^\beta(a_1+b_j-1)+\alpha-1\\
 &= \beta a_1+|B|-\beta+\alpha-1\\
 &\ge |A|+B|-1,
\end{align*}
where in the last line we have used the fact that $\beta\ge\alpha$, and
$\beta a_1\ge \alpha a_1=\sum_{i=1}^\alpha a_1\ge\sum_{i=1}^\alpha a_i=|A|$.
\end{proof}

\section{The Cauchy-Davenport Theorem for Finite Groups}

We now extend Theorem~\ref{theorem:CDsolvable} to all finite
groups.

\begin{theorem}
 Let\/ $G$ be a finite group and let\/ $A$ and\/ $B$ be
 non-empty subsets of\/~$G$.
 Then\/ $|A\cdot B|\ge\min\{p(G),|A|+|B|-1\}$.
\end{theorem}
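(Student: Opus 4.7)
The plan is to reduce the statement to Theorem~\ref{theorem:CDsolvable} by invoking the Feit--Thompson theorem, after first disposing of the degenerate case $p(G)=2$.

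I would begin with the case $p(G)=2$, which by Lemma~\ref{lemma:p(G)} is exactly the case where $|G|$ is even, and which in particular covers every non-solvable finite group (since Feit--Thompson forces odd-order groups to be solvable). Here the minimum on the right of the inequality is $1$ when $|A|=|B|=1$, in which case the bound is immediate, and equals $2$ otherwise; assuming without loss of generality that $|B|\ge 2$, for any fixed $a\in A$ the set $aB\subseteq A\cdot B$ already has $|B|\ge 2$ elements, so $|A\cdot B|\ge 2$ as required.

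Next I would assume $p(G)\ge 3$. By Lemma~\ref{lemma:p(G)} this means $|G|$ is odd, and Theorem~\ref{theorem:feit-thompson} then guarantees that $G$ is solvable, placing us in the hypotheses of Theorem~\ref{theorem:CDsolvable}. If $|A|+|B|-1\le p(G)$, that theorem applies directly and yields $|A\cdot B|\ge |A|+|B|-1$. Otherwise $|A|+|B|\ge p(G)+2$, and I would pass to subsets: choose $A'\subseteq A$ and $B'\subseteq B$ with $|A'|+|B'|=p(G)+1$ and $|A'|,|B'|\ge 1$, for example by taking $|A'|=\min(|A|,p(G))$ and $|B'|=p(G)+1-|A'|$, where the inequality $|B'|\le |B|$ follows from $|A|+|B|\ge p(G)+2$. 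Applying Theorem~\ref{theorem:CDsolvable} to $A',B'$, whose sizes satisfy $|A'|+|B'|-1=p(G)$, gives $|A'\cdot B'|\ge p(G)$, and hence $|A\cdot B|\ge |A'\cdot B'|\ge p(G)$, matching the required minimum.

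The main obstacle is conceptual rather than technical: one must observe that the apparently threatening non-solvable case is in fact harmless, since Feit--Thompson forces it to satisfy $p(G)=2$, a regime in which the bound is trivial by cancellation. Once this is recognized, the remaining work is just a short size-matching step which brings an arbitrary pair $A,B$ within the hypothesis $|A|+|B|-1\le p(G)$ of Theorem~\ref{theorem:CDsolvable}.
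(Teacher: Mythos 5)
Your proposal is correct and follows essentially the same route as the paper: dispose of the even-order case trivially, invoke Feit--Thompson to reduce the odd-order case to solvable groups, apply Theorem~\ref{theorem:CDsolvable} directly when $|A|+|B|-1\le p(G)$, and otherwise pass to subsets $A^*\subseteq A$, $B^*\subseteq B$ with $|A^*|+|B^*|-1=p(G)$. The only difference is that you spell out the existence of such subsets a bit more explicitly than the paper does.
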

\begin{proof}
If $G$ is of even order then $p(G)=2$. The result is then trivial
as $|A\cdot B|\ge 2=p(G)$ if either $|A|>1$ or $|B|>1$, while
$|A\cdot B|=1=|A|+|B|-1$ if $|A|=|B|=1$. If $G$ is of odd order
then by Theorem~\ref{theorem:feit-thompson}, $G$ is solvable.  The result
then follows from Theorem~\ref{theorem:CDsolvable} when
$|A|+|B|-1\le p(G)$. If $|A|+|B|-1>p(G)$, take non-empty
subsets $A^*\subseteq A$, $B^*\subseteq B$ such that $|A^*|+|B^*|-1=p(G)$.
Then $|A\cdot B|\ge|A^*\cdot B^*|=p(G)$.
\end{proof}

\section{Closing Remarks}
A problem closely related to the Cauchy-Davenport Theorem was the conjecture Paul Erd\H{o}s and Hans Heilbronn posed in the early 1960s.  Namely,  if the addition in the Cauchy-Davenport Theorem is restricted to distinct elements, the lower bound changes only slightly. Erd\H{o}s stated this conjecture in 1963 during a number theory conference at the University of Colorado \cite{Erdos Col. Con.}. Interestingly, Erd\H{o}s and Heilbronn did not mention the conjecture in their
$1964$ paper on sums of sets of congruence classes \cite{Erdos
Heilbronn} though Erd\H{o}s mentioned it often in his lectures (see
\cite{Nath}, page 106). Eventually the conjecture was formally
stated in Erd\H{o}s' contribution to a 1971 text \cite{Erdos} as
well as in a book by Erd\H{o}s and Graham in 1980 \cite{Erdos
Graham}. In particular,

\begin{theorem}[Erd\H{o}s-Heilbronn Problem]
If\/ $A$ and\/ $B$ are non-empty subsets of\/ $\Z/p\Z$ with $p$ prime,
then $|A\dot{+}B|\ge\min\{p,|A|+|B|-3\}$, where
$A\dot{+}B:=\{a+b\bmod p\mid a\in A$, $b\in B$ and\/ $a\ne b\,\}$.
\end{theorem}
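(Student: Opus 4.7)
The plan is to invoke Alon's Combinatorial Nullstellensatz via the polynomial method, in the spirit of the short proof of the Cauchy--Davenport theorem by Alon, Nathanson and Ruzsa cited earlier in the paper. Set $m=|A|$ and $n=|B|$, and assume $p$ is odd (the statement is trivial for $p=2$).

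First, I would dispose of the ``large'' regime $m+n\ge p+2$ by a pigeonhole argument: for each $c\in\Z/p\Z$, inclusion-exclusion in $\Z/p\Z$ gives $|A\cap(c-B)|\ge m+n-p\ge 2$, and since the unique self-partner equation $a=c-a$ has at most one solution $a=c/2$, we may choose $a\ne b$ with $a+b=c$. Hence $A\dot{+}B=\Z/p\Z$ in this range. So assume $m+n\le p+1$, in which case $\min\{p,m+n-3\}=m+n-3$.

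Suppose for contradiction that $|A\dot{+}B|\le m+n-4$, choose $C'\subseteq\Z/p\Z$ with $A\dot{+}B\subseteq C'$ and $|C'|=m+n-3$, and define
\[
 f(x,y)=(x-y)\prod_{c\in C'}(x+y-c)\in\FF_p[x,y].
\]
Then $f$ vanishes on $A\times B$: for $a=b$ the factor $(x-y)$ vanishes, and otherwise $a+b\in A\dot{+}B\subseteq C'$ kills a linear factor. Since $\deg f=(m-1)+(n-1)$, the Combinatorial Nullstellensatz forces the coefficient of $x^{m-1}y^{n-1}$ in $f$ to vanish in $\FF_p$. Only the top-degree piece $(x-y)(x+y)^{m+n-3}$ contributes, and a direct binomial expansion gives
\[
 [x^{m-1}y^{n-1}]\,(x-y)(x+y)^{m+n-3}=\binom{m+n-3}{m-2}-\binom{m+n-3}{m-1}=\frac{(m-n)\,(m+n-3)!}{(m-1)!\,(n-1)!}.
\]
Under the assumption $m+n\le p+1$, every factorial on the right has all its prime factors below $p$, so the displayed quantity is a nonzero residue in $\FF_p$ precisely when $m\ne n$. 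The contradiction settles the asymmetric case.

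The main obstacle is the symmetric case $m=n$, where the coefficient above vanishes and $f$ is too antisymmetric to distinguish $A$ from $B$. I would handle it by a parallel argument with an asymmetric monomial: take $|C''|=2m-4$, form the degree-$(2m-3)$ polynomial $(x-y)\prod_{c\in C''}(x+y-c)$, which still vanishes on $A\times B$, and extract the coefficient of $x^{m-1}y^{m-2}$. The analogous computation yields
\[
 [x^{m-1}y^{m-2}]\,(x-y)(x+y)^{2m-4}=\binom{2m-4}{m-2}-\binom{2m-4}{m-1}=\frac{(2m-4)!}{(m-1)!\,(m-2)!},
\]
which is the Catalan number $C_{m-2}$ and is again a nonzero residue mod $p$ under the hypothesis $m+n\le p+1$. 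Applying the Combinatorial Nullstellensatz with $A$ on the $x$-side and any $(m-1)$-element subset of $B$ on the $y$-side produces the required contradiction and completes the proof.
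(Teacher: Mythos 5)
Your proof is correct, but note that the paper itself never proves this statement: it appears only in the Closing Remarks as a quoted result, attributed to Dias da Silva--Hamidoune (for $A=B$) and to Alon--Nathanson--Ruzsa (general case, via the polynomial method), so there is no in-paper argument to compare against. What you have written is essentially a faithful reconstruction of the Alon--Nathanson--Ruzsa proof. The large-regime pigeonhole ($m+n\ge p+2$ forces $|A\cap(c-B)|\ge 2$, and the single self-paired solution $a=c/2$ cannot exhaust it) is sound, and both coefficient extractions check out: $\binom{m+n-3}{m-2}-\binom{m+n-3}{m-1}=\frac{(m-n)(m+n-3)!}{(m-1)!(n-1)!}$ and $\binom{2m-4}{m-2}-\binom{2m-4}{m-1}=\frac{(2m-4)!}{(m-1)!(m-2)!}$, each a unit in $\FF_p$ under $m+n\le p+1$ because every factorial involved is of an integer below $p$. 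Your one genuine deviation from the standard write-up is the symmetric case: ANR handle $|A|=|B|$ by deleting an element of $B$ and citing the asymmetric case, whereas you apply the Nullstellensatz directly to the asymmetric monomial $x^{m-1}y^{m-2}$ over $A\times B^*$; the two are equivalent, but yours makes the Catalan-number coefficient explicit. Two cosmetic remarks: your asymmetric branch actually establishes the stronger bound $|A|+|B|-2$ when $|A|\ne|B|$ (you only need to assume $|A\dot{+}B|\le m+n-3$ to embed it in $C'$), and the degenerate cases $m+n\le 3$, where $m+n-3\le 0$ and the claim is vacuous, should be dispatched explicitly before choosing a set $C'$ of size $m+n-3$.
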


The conjecture was first proved for the case $A=B$ by Dias da
Silva and Hamidounne in 1994~\cite{Dias da Silva} with the
more general case established by Alon, Nathanson, and
Ruzsa using the polynomial method in 1995~\cite{Alon}.
K{\'a}rolyi extended this result to abelian groups for the
case $A=B$ in 2004~\cite{Gyula2} and to cyclic groups of prime
powered order in 2005~\cite{Gyula Compact}.

A more general result of the Erd\H{o}s-Heilbronn Problem for finite groups is established in \cite{Balister/Wheeler}.

\end{document}